\theoremstyle{plain}
\newtheorem{theorem}{Theorem}
\newtheorem{lemma}[theorem]{Lemma}
\newtheorem{corollary}[theorem]{Corollary}
\theoremstyle{definition}
\newtheorem{remark}[theorem]{Remark}
\newcommand\eps{\epsilon}
\newcommand\Z{\mathbb{Z}}
\newcommand\C{\mathbb{C}}
\newcommand\N{\mathbb{N}}
\newcommand\F{\mathbb{F}}
\newcommand\Q{\mathbb{Q}}
\def\p{\mathfrak{p}}
\begin{document}

\begin{frontmatter}[classification=text]


\author[EB]{Emmanuel Breuillard\thanks{Supported by ERC Grant no. 617129 `GeTeMo'}}
\author[PV]{P\'eter P. Varj\'u\thanks{Supported by the Royal Society}}

\begin{abstract}
We give a reformulation of the Lehmer conjecture about algebraic integers in terms of a simple counting problem modulo $p$.
\end{abstract}
\end{frontmatter}





\vspace{2em}

Let $\mathcal{S}_d$ be the set of all polynomials of degree at most $d$ with coefficients in $\{0,1\}$.
Recall that if $\alpha$ is an algebraic number, with minimal polynomial $\pi_\alpha(X)=a_0\prod_1^n(X-\alpha_i)$ in $\Z[X]$, then the Mahler measure of $\alpha$ is the quantity
$$M(\alpha) = |a_0| \prod_1^n \max\{1,|\alpha_i|\}.$$

In \cite{breuillard-varju-entropy} we showed that the Mahler measure is related to the growth rate of the cardinality of the set 
$$\mathcal{S}_d(\alpha):=\{P(\alpha) ; P \in \mathcal{S}_d\}$$
as follows.

\begin{theorem}[{\cite[Thm 5, Lem 16]{breuillard-varju-entropy}}] \label{thm:bv} 
 There is a numerical constant $c>0$ such that for every algebraic number $\alpha$, 
$$\min\{2,M(\alpha)\}^c \leq \lim_{d \to+\infty} |\mathcal{S}_d(\alpha)|^{\frac{1}{d}} \leq \min\{2,M(\alpha)\}.$$
\end{theorem}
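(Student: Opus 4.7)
The theorem asserts two bounds on $\gamma(\alpha):=\lim_{d\to\infty}|\mathcal{S}_d(\alpha)|^{1/d}$: the upper bound $\gamma(\alpha)\leq \min\{2,M(\alpha)\}$, which is essentially soft, and a matching polynomial lower bound $\gamma(\alpha)\geq \min\{2,M(\alpha)\}^c$, which is the conceptual heart of the result. I would handle the two directions separately.

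\textbf{Upper bound.} The estimate $\gamma(\alpha)\leq 2$ is immediate from $|\mathcal{S}_d|\leq 2^{d+1}$. To obtain $\gamma(\alpha)\leq M(\alpha)$, I would pass to algebraic integers by rescaling: $a_0^d P(\alpha)\in\Z[a_0\alpha]$, and its $i$-th archimedean conjugate satisfies
$$|\sigma_i(a_0^d P(\alpha))|\leq |a_0|^d(d+1)\max\{1,|\alpha_i|\}^d.$$
The image of $\Z[a_0\alpha]$ under the canonical embedding $\Q(\alpha)\hookrightarrow \R^{r_1}\times\C^{r_2}$ is a lattice of fixed covolume, so a Minkowski-type lattice-point count inside the box cut out by the inequalities above produces $|\mathcal{S}_d(\alpha)|\ll_\alpha (d+1)^n M(\alpha)^d$. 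Taking $d$-th roots gives the claim.

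\textbf{Lower bound.} Let $\mu_d$ be the law of $P(\alpha)$ when $P$ is sampled uniformly from $\mathcal{S}_d$, so that $|\mathcal{S}_d(\alpha)|=|\Supp(\mu_d)|\geq 1/\|\mu_d\|_2^2$ by Cauchy--Schwarz. The problem reduces to establishing a decay estimate $\|\mu_d\|_2^2\leq \min\{2,M(\alpha)\}^{-cd}$. Because $\mu_d$ is the convolution $*_{j=0}^d\frac{1}{2}(\delta_0+\delta_{\alpha^j})$, its Fourier transform factorises as $\widehat{\mu_d}(\xi)=\prod_{j=0}^d\cos(\pi\xi\alpha^j)$, and one expects two competing mechanisms of decay for $\|\widehat{\mu_d}\|_2^2$: at each archimedean place with $|\alpha_i|>1$ the factors oscillate on scales $|\alpha_i|^{-j}$, producing cancellation with rate $\log|\alpha_i|$; and independently, the Bernoulli averaging gives a rate of $\log 2$ along generic directions. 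Summing the archimedean contributions yields the $M(\alpha)$ branch of the minimum, and the bit-averaging yields the $2$ branch.

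The main obstacle is to make this decay quantitative with an absolute exponent $c$. The natural route is a Halász-type anti-concentration estimate tailored to the algebraic setting: if $\|\mu_d\|_2^2$ is too large, then many differences $P(\alpha)-Q(\alpha)$ with $P,Q\in\mathcal{S}_d$ must be simultaneously small at every archimedean place, and such concentration on a thin set in $\R^{r_1}\times\C^{r_2}$ can be shown to force the conjugates of $\alpha$ to cluster on or near the unit circle, ultimately contradicting $M(\alpha)>1$ in a quantitative way. Handling the regime where some $|\alpha_i|$ are merely \emph{close to} $1$—where the competition between the two mechanisms is delicate—is what forces $c<1$ and what prevents a matching upper constant on the lower bound.
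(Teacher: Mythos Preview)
The paper does not contain a proof of this theorem: it is quoted verbatim from the authors' earlier article \cite{breuillard-varju-entropy} (Theorem~5 and Lemma~16 there), and is used here only as a black box. The single remark the present paper makes about the argument is that ``the proof of the lower bound in Theorem~\ref{thm:bv} consists in establishing a lower bound on the entropy of the above process'', referring to the Shannon entropy of the random sum $\sum_{i=0}^d \epsilon_i\alpha^i$.

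Your upper-bound sketch is fine and is essentially the content of the cited Lemma~16, which the paper restates as Lemma~\ref{K}.

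For the lower bound, your outline diverges from the actual proof and, as written, does not constitute a workable strategy. Two specific issues. First, the measure $\mu_d$ is purely atomic on a countable subset of a number field, so the collision probability $\|\mu_d\|_2^2$ counts \emph{exact} coincidences $P(\alpha)=Q(\alpha)$; your passage to ``many differences $P(\alpha)-Q(\alpha)$ simultaneously small at every archimedean place'' conflates the discrete picture with an archimedean one, and the Fourier formula $\widehat{\mu_d}(\xi)=\prod_j\cos(\pi\xi\alpha^j)$ needs a precise meaning (what is $\xi$, and on which group is Plancherel being applied?) before any decay estimate can be extracted. Second, the concluding mechanism---that excessive concentration would ``ultimately contradict $M(\alpha)>1$''---is circular for the hard regime: the theorem must give a uniform exponent $c$ precisely when $M(\alpha)$ is arbitrarily close to $1$, so one cannot appeal to $M(\alpha)>1$ as a non-degeneracy input. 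The proof in \cite{breuillard-varju-entropy} instead runs an entropy argument (Shannon, not R\'enyi-$2$) with a submultiplicativity/convexity structure across scales and places, and the explicit value $c\approx 0.44$ emerges from that analysis rather than from a Hal\'asz-type bound.
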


One can take $c=0.44$. The limit above always exists by sub-multiplicativity of $d \mapsto |\mathcal{S}_d(\alpha)|$. The celebrated Lehmer conjecture posits the existence of a numerical constant $c_0>0$ such that every algebraic number $\alpha$ that is not a root of unity must have
$$M(\alpha)  >1 + c_0.$$ 

In particular the following reformulation of the Lehmer conjecture follows immediately from Theorem~\ref{thm:bv}.

\begin{corollary} The following are equivalent.
\begin{enumerate}
\item  There exists $c_1>0$ such that 
$$\lim_{d \to+\infty} |\mathcal{S}_d(\alpha)|^{\frac{1}{d}} > 1+c_1$$
for every complex number $\alpha$ that is not a root of unity.
\item The Lehmer conjecture is true.
\end{enumerate}
\end{corollary}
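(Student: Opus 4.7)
The plan is to reduce each implication to a direct application of Theorem~\ref{thm:bv}, with a brief side remark to handle transcendental $\alpha$, which is included in (1) but not in the Lehmer conjecture.

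First I would dispose of the transcendental case. If $\alpha$ is transcendental, then for $P,Q \in \mathcal{S}_d$ one has $P(\alpha)=Q(\alpha)$ only when $P=Q$, since otherwise $P-Q$ would be a nonzero integer polynomial vanishing at $\alpha$. Hence $|\mathcal{S}_d(\alpha)| = 2^{d+1}$ and $\lim_{d\to\infty}|\mathcal{S}_d(\alpha)|^{1/d} = 2$, so the transcendental values of $\alpha$ pose no problem for statement (1) as long as $c_1<1$.

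For algebraic $\alpha$ not a root of unity, the implication (1)$\Rightarrow$(2) follows from the upper bound in Theorem~\ref{thm:bv}: it gives $\min\{2,M(\alpha)\}\ge \lim_d |\mathcal{S}_d(\alpha)|^{1/d} > 1+c_1$, and, after replacing $c_1$ by $\min\{c_1,1/2\}$ if necessary, this means $M(\alpha) > 1+c_1$, so Lehmer's conjecture holds with $c_0 = c_1$. Conversely, (2)$\Rightarrow$(1) follows from the lower bound: for algebraic non-root-of-unity $\alpha$, Lehmer's conjecture gives $M(\alpha) > 1+c_0$, hence
\[
\lim_{d\to\infty}|\mathcal{S}_d(\alpha)|^{1/d} \ge \min\{2,M(\alpha)\}^c \ge \min\{2,1+c_0\}^c,
\]
so one takes $c_1 := \min\{2,1+c_0\}^c - 1 > 0$, which also handles the transcendental case by the first paragraph.

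There is essentially no obstacle here; the whole content of the corollary is already packaged in Theorem~\ref{thm:bv}, and the only point requiring the slightest attention is that statement (1) quantifies over all complex $\alpha$, which forces the separate (trivial) observation for transcendental $\alpha$.
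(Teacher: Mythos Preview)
Your proposal is correct and is exactly the argument the paper has in mind: the paper simply states that the corollary ``follows immediately from Theorem~\ref{thm:bv}'' and separately remarks that $|\mathcal{S}_d(\alpha)|=2^{d+1}$ for transcendental $\alpha$, which is precisely the content you have spelled out. There is nothing to add.
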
 

Note that if $\alpha$ is transcendental, or simply not a root of a polynomial of the form $P-Q$, $P,Q$ in $\mathcal{S}_d$ for some $d$, then $ |\mathcal{S}_d(\alpha)|=2^{d+1}$ for all $d$. Note further that if $\alpha$ is a root of unity, then $|\mathcal{S}_d(\alpha)|$ grows at most polynomially in $d$. 

 The set $\mathcal{S}_d(\alpha)$ is the support of the random process $\sum_0^d \epsilon_i \alpha^i$ where the $\epsilon_i$ are Bernoulli random variables equal to $0$ or $1$ with probability $\frac{1}{2}$. The proof of the lower bound in Theorem \ref{thm:bv} consists in establishing a lower bound on the entropy of the above process. 

The purpose of this short note is to give a mod-$p$ version of the above equivalence, showing that Lehmer's conjecture is equivalent to an easily formulated assertion about finite fields.

We first propose the following reformulation. We write $\log$ for the base $2$ logarithm. Given $C\geq 1$, we say that a prime $p$ is $C$-wild if there is $x \in \F_p^{\times}$ of multiplicative order at least $(\log p)^2$ such that not every element of $\F_p$ is a sum of at most $(\log p)^C$ elements from the geometric progression $H:=\{x,x^2,\ldots,x^{[C \log p]}\}$, where $[y]$ denotes the integer part of $y$. 
(Here we allow choosing the elements of $H$ multiple times.)

\begin{theorem}\label{second} The following are equivalent.
\begin{enumerate}
\item[$(a)$] The Lehmer conjecture is true. 
\item[$(b)$] For some  $C> 1$,  almost no prime is $C$-wild. That is, as $X \to +\infty$
$$|\{p \leq X \textnormal{ and } p \textnormal{ is } C\textnormal{-wild} \}| = o(|\{p \leq X\}|).$$
\end{enumerate}
\end{theorem}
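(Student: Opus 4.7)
The plan is to prove (a) $\Rightarrow$ (b) and (b) $\Rightarrow$ (a) separately. For (b) $\Rightarrow$ (a) we argue the contrapositive: assuming Lehmer fails, we produce a positive density of $C$-wild primes for every $C > 1$. Given $C$, choose $\alpha$ algebraic, not a root of unity, with $M(\alpha) < 2^{1/C}$; since $M(\alpha) < 2$ and the leading coefficient of its minimal polynomial is a positive integer $\le M(\alpha)$, it equals $1$, so $\alpha$ is an algebraic integer in a number field $K$ of degree $n$. By the Chebotarev density theorem, a positive density of primes $p$ split completely in $K$, providing a reduction $\bar\alpha \in \F_p^\times$ through some degree-one prime; moreover only $O((\log X)^4)$ of such $p \le X$ have $\operatorname{ord}(\bar\alpha) \le (\log X)^2$, since $\operatorname{ord}(\bar\alpha) \le m$ forces $p \mid N_{K/\Q}(\alpha^m - 1)$ and the bound $|N_{K/\Q}(\alpha^m-1)| \le 2^n M(\alpha)^m$ caps the number of distinct prime divisors by $O_\alpha(m)$. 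For the remaining primes, any element of $\F_p$ expressible as a sum of at most $N := (\log p)^C$ elements from $\{\bar\alpha, \dots, \bar\alpha^{[C\log p]}\}$ lifts to some $\sum c_i \alpha^i \in \mathcal{O}_K$ (with $c_i \ge 0$, $\sum c_i \le N$, degree $\le d := [C\log p]$); under the Minkowski embedding of $K$ these lie in a box of volume $O_\alpha(N^n M(\alpha)^d)$, and a lattice-point count yields $O_\alpha((\log p)^{Cn}\, p^{C\log_2 M(\alpha)}) = o(p)$ by the choice of $M(\alpha)$. Hence $\bar\alpha$ witnesses $C$-wildness, contradicting (b).

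For (a) $\Rightarrow$ (b), we assume Lehmer with constant $c_0$ and choose $C$ large enough that $2^{1/C} < 1 + c_0$ and $0.44 \cdot C \log_2(1 + c_0) > 1$. For every $(p, x)$ with $x \in \F_p^\times$ of order $\ge (\log p)^2$, Minkowski's first theorem applied to the lattice $\Lambda_{x,p} := \{R \in \Z^{d+1} : R(x) \equiv 0 \pmod p\}$ (of rank $d+1$ and covolume $p$) provides a nonzero $R$ with $\|R\|_\infty \le p^{1/(d+1)} = 2^{1/C + o(1)} < 1 + c_0 < 2$ for $p$ large, hence with coefficients in $\{-1, 0, 1\}$. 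Factor $R = \pm \prod_i \Phi_{n_i} \cdot \prod_j F_j$ over $\Z$ into cyclotomic and non-cyclotomic irreducible parts. Lehmer gives $M(F_j) \ge 1 + c_0$; combined with $M(R) \le \|R\|_2 \le \sqrt{d+1}$, this bounds the number of non-cyclotomic factors by $O_{c_0}(\log\log p)$. Since the degrees of the cyclotomic factors sum to $\le d$, each $n_i = O(d \log\log d) = o((\log p)^2)$, so $x$ (of order $\ge (\log p)^2$) is not a root mod $p$ of any $\Phi_{n_i}$; hence $x$ is a root mod $p$ of some non-cyclotomic $F_j$, whose roots in $\overline{\Q}$ have Mahler measure $\ge 1 + c_0$.

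It remains to conclude that $\F_p$ is covered, ruling out $C$-wildness. The plan is to transfer the entropy-based lower bound of Theorem~\ref{thm:bv} to $\F_p$: show that $|\mathcal{S}_d(x)| \ge \min(p,\, M(\alpha)^{0.44\, d})$ up to polylog factors, where $\alpha$ is the lift just constructed. By the choice of $C$, this yields $|\mathcal{S}_d(x)| \ge p/2$ for large $p$, whence Cauchy--Davenport gives $\mathcal{S}_d(x) + \mathcal{S}_d(x) = \F_p$; since every element of this set is a sum of at most $2(d+1)$ elements from $\{x, \dots, x^d\} \cup \{1\}$, passing to a slightly larger $C$ absorbs the contributions of the constant-term shifts and concludes that $p$ is not $C$-wild. \textbf{The main obstacle} is the modular entropy lower bound itself: the entropy argument of~\cite{breuillard-varju-entropy} is carried out over $\C$, so one must check that reduction mod $p$ does not destroy the entropy growth, i.e., that the extra collisions introduced by passing from $\mathcal{O}_K$ to $\F_p$ do not suppress the support of $\sum \epsilon_i x^i$ until it saturates at $p$.
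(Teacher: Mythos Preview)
Your (b) $\Rightarrow$ (a) is essentially the paper's argument: the lattice-point count is Lemma~\ref{K}, and the order condition is handled by the same norm bound on $N_{K/\Q}(\alpha^m-1)$.

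For (a) $\Rightarrow$ (b) you have correctly located a genuine gap and not closed it. The modular transfer $|\mathcal{S}_d(x)| \geq \min(p,\, M(\alpha)^{0.44d})$ is not automatic: collisions $P(x)=Q(x)$ in $\F_p$ can come from $\Q$-irreducible factors of $P-Q$ \emph{other} than your $F_j$, and you have no control over those. The paper's fix is a two-scale argument. First, work at a \emph{small} scale $d$ of order $\sqrt{\delta}\log p$: a resultant bound shows that for all but at most $10^d = X^{O(\sqrt{\delta})}$ primes $p \leq X$, the $\Q$-irreducible factors of the polynomials $P-Q$ (with $P,Q \in \mathcal{S}_d$) remain pairwise coprime modulo $p$; for such non-exceptional $p$ each $x \in \F_p$ is a root mod $p$ of at most one such factor $D$, and then the partitions of $\mathcal{S}_d$ induced by evaluation at $x$ and at a complex root $\beta$ of $D$ coincide \emph{exactly}, giving $|\mathcal{S}_n(x)| = |\mathcal{S}_n(\beta)|$ for all $n \leq d$. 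Combined with Theorem~\ref{thm:bv} and Lehmer, this forces $|\mathcal{S}_d(x)| \geq p^\delta$ for some $d \leq \log p$ whenever $x$ has large enough order. Second, amplify from $p^\delta$ to all of $\F_p$ using the \emph{sum-product theorem} over $\F_p$: the family $\mathcal{S}_d^K(x)$ absorbs both sums and products upon enlarging $(d,K)$, so a bounded number $n = n(\delta)$ of iterations yields $\mathcal{S}_{2^n d}^{d^{2^n}}(x) = \F_p$, whence $p$ is not $C$-wild for $C = 2^{n+1}$. Your direct Cauchy--Davenport plan at $d = [C\log p]$ cannot be salvaged by the resultant technique, since at that scale the exceptional-prime count $10^d$ already exceeds $X$; the small-$d$ exact lift followed by sum-product amplification is the missing idea.
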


The implication $(b)\Rightarrow(a)$ is the easier one, so this result should not be regarded as making any progress towards a
positive answer to the Lehmer conjecture.
The main point of this note is to show the converse implication.
A crucial ingredient in the proof is the lower bound for $|\mathcal{S}_d(\alpha)|$ in Theorem \ref{thm:bv}.

As it turns out the method of proof actually yields a  more precise result: the validity of Lehmer's conjecture implies a stronger statement than $(b)$, and vice versa a weaker version of $(b)$ already implies Lehmer's conjecture. We spell this out in Theorem \ref{main} below and to this aim we first introduce some notation.

Let $\delta,\epsilon>0$. We will say that a prime number $p$ is \emph{$\delta$-bad} if there is a non-zero residue class $x$ modulo $p$ with multiplicative order at least $\log p \log \log \log p$ such that $|\mathcal{S}_d(x)| \leq p^\delta$ for all $d \leq \log p$, where 
$$\mathcal{S}_d(x):=\{P(x) ; P \in \mathcal{S}_d\}.$$

For $K \in \N$, we let $\mathcal{S}^K_d$ be the family of polynomials of degree at most $d$ with integer coefficients in the interval $[-K,K]$. Clearly  \begin{equation}\label{sumset}\mathcal{S}_d \subset  \mathcal{S}^K_d \subset \mathcal{S}_d \pm \ldots \pm  \mathcal{S}_d \subset  \mathcal{S}^{2K}_d,\end{equation} where there are $2K$ summands in the sumset.

We will say that $p$ is \emph{$(\delta,\epsilon)$-very bad} if there is a non-zero residue class $x$ modulo $p$ with multiplicative order at least $\sqrt{\frac{p}{\log p}}$ such that $|\mathcal{S}^{K}_d(x)| \leq p^\delta$ for all $d \leq \frac{1}{\delta}\log p$ and all $K\leq 2^{\epsilon d}$.

We can now state our second reformulation.

\begin{theorem}\label{main} The following are equivalent.
\begin{enumerate}
\item[$(a)$] The Lehmer conjecture is true. 
\item[$(b)$] There is $\delta>0$ such that for all $\epsilon>0$ as $X\to +\infty$,
\begin{equation}
|\{p \leq X \textnormal{ and } p \textnormal{ is } (\delta,\epsilon)\textnormal{-very bad} \}| = o(|\{p \leq X\}|).
\end{equation} 
\item[$(c)$] For every $\varepsilon>0$ there is $\delta>0$ such that as $X\to +\infty$,
\begin{equation}
|\{p \leq X ; p \textnormal{ is } \delta\textnormal{-bad} \}| \ll_\varepsilon X^\varepsilon.
\end{equation} 
\end{enumerate}
\end{theorem}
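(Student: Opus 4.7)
My plan is to prove (b)$\Rightarrow$(a), (c)$\Rightarrow$(a), and the harder converses (a)$\Rightarrow$(b) and (a)$\Rightarrow$(c), using Theorem~\ref{thm:bv} as the bridge between mod-$p$ counting and Mahler measures.

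For the easy directions I argue contrapositively. If Lehmer fails, choose an algebraic integer $\alpha$ (not a root of unity) with $M(\alpha)$ arbitrarily close to $1$. Theorem~\ref{thm:bv} then gives $|\mathcal{S}_d(\alpha)|^{1/d}$ close to $1$, and (\ref{sumset}) extends this to $|\mathcal{S}^K_d(\alpha)|\leq|\mathcal{S}_d(\alpha)|^{2K}$. Reducing modulo $p$ only shrinks these counts, so the required bounds on $|\mathcal{S}_d(x_p)|$ and $|\mathcal{S}^K_d(x_p)|$ hold automatically for $x_p:=\alpha\bmod p$. The multiplicative-order condition follows from $\operatorname{ord}_p(\alpha)\geq(\log p)/\log M(\alpha)$ (via a norm bound on $\alpha^n-1$), boosted by standard density results to give order $\geq\sqrt{p/\log p}$ (respectively $\geq\log p\log\log\log p$) along a density-one subset of primes. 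This produces $\gg X/\log X$ $(\delta,\epsilon)$-very bad (respectively $\delta$-bad) primes, contradicting (b) (respectively (c)).

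The substance is (a)$\Rightarrow$(c), with (a)$\Rightarrow$(b) handled analogously. Assume Lehmer with constant $c_0>0$, fix $\epsilon>0$, and pick $\delta=\delta(\epsilon,c_0)<\min(\epsilon/3,\,c\log_2(1+c_0))$. Suppose for contradiction that there are $N\gg X^\epsilon$ $\delta$-bad primes $p\leq X$. For each such $p$ with $d=\lfloor\log p\rfloor$, pigeonhole on the evaluation map $\mathcal{S}^1_d\to\F_p$ yields $\geq 3^{d+1}/p^{2\delta}$ polynomials $R\in\mathcal{S}^1_d$ with $R(x_p)\equiv 0\bmod p$. Averaging over $p$, a significant fraction of $R\in\mathcal{S}^1_d$ are \emph{popular}, vanishing modulo $\gtrsim X^{\epsilon-2\delta}$ primes $p$. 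Pick such a popular $R$; a further pigeonhole over its $\leq d+1$ roots in $\overline{\Q}$ produces a root $\alpha$ with $\alpha\equiv x_p\bmod\mathfrak{p}$ (for some prime $\mathfrak{p}$ above $p$) for a subset $T$ of these primes with $|T|\gg X^{\epsilon-2\delta}/\log X$, far larger than $\log X$.

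For this $\alpha$, since $|\alpha|,|\sigma\alpha|\leq 2$ (as $\alpha$ is a root of a polynomial with $\{-1,0,1\}$ coefficients), any pair $(P,Q)\in\mathcal{S}_d^2$ with $P(x_p)\equiv Q(x_p)\bmod p$ for a subset $T'\subseteq T$ of size $\geq C\log X$ satisfies $P(\alpha)=Q(\alpha)$ in $\overline{\Q}$, by a height argument: $|N(P(\alpha)-Q(\alpha))|\leq 2^{O(d^2)}$ is dominated by $\prod_{p\in T'}p\geq X^{|T'|}$. Counting carefully, this promotes enough modular collisions into global identities to bound $|\mathcal{S}_d(\alpha)|\lesssim p^\delta$, and Theorem~\ref{thm:bv} then gives $M(\alpha)\leq 2^{\delta/c}<1+c_0$, so $\alpha$ is a root of unity by Lehmer. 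But the large order of $x_p$ modulo $\mathfrak{p}$ forces $\alpha$ not to be a root of unity, the required contradiction. The main obstacle is the last counting step: showing that enough collision pairs $(P,Q)$ are simultaneously shared across enough primes $p\in T$ to force a genuine reduction in $|\mathcal{S}_d(\alpha)|$, which is where the full flexibility of the $\delta$-bad condition (valid for all $d\leq\log p$, not just one value) is used, and where a more delicate pigeonhole---possibly via the larger spaces $\mathcal{S}^K_d$ afforded by (\ref{sumset})---becomes necessary to make the quantitative trade-off between $\epsilon$ and $\delta$ work.
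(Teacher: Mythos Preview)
Your easy direction is fine and matches the paper's. Note that (c)$\Rightarrow$(b) is immediate (every large $(\delta,\epsilon)$-very bad prime is $\delta$-bad for $\delta,\epsilon<\tfrac12$), so proving (c)$\Rightarrow$(a) separately and (a)$\Rightarrow$(b) ``analogously'' are both unnecessary: only (b)$\Rightarrow$(a) and (a)$\Rightarrow$(c) require work.

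Your argument for (a)$\Rightarrow$(c) has a real gap, which you yourself flag as ``the main obstacle'' but do not resolve. Having found $\alpha\in\overline{\Q}$ with $\alpha\equiv x_p\bmod\mathfrak{p}$ for $p\in T$, you need $|\mathcal{S}_d(\alpha)|$ to be small. But reduction mod $\mathfrak{p}$ only goes one way: $P(\alpha)=Q(\alpha)$ implies $P(x_p)\equiv Q(x_p)$, so a priori $|\mathcal{S}_d(x_p)|\le|\mathcal{S}_d(\alpha)|$, the wrong inequality. Your height argument lifts a single collision pair shared by $\gtrsim\log X$ primes of $T$ to a global identity, but the collision pairs for different $p\in T$ have no reason to coincide, and even a large supply of global collisions $P(\alpha)=Q(\alpha)$ need not force $|\mathcal{S}_d(\alpha)|$ to be small (they could all sit in one large fibre of $P\mapsto P(\alpha)$). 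This is a structural obstruction, not a quantitative one, and further pigeonhole over $\mathcal{S}^K_d$ does not repair it.

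The paper's device avoids this entirely. Call $p$ \emph{$d$-exceptional} if $p$ divides $Res(D_1,D_2)$ for some pair of distinct $\Q$-irreducible $D_1,D_2$ each dividing a nonzero element of $\mathcal{P}_d$. A Hadamard bound on the resultant, combined with $M(D_i)\le d+1$, shows there are at most $10^d$ such primes. If $p$ is \emph{not} $d$-exceptional then the reductions mod $p$ of these irreducibles are pairwise coprime, so any $x\in\F_p$ is a root of at most one such $D$, and every collision $P(x)\equiv Q(x)$ with $P,Q\in\mathcal{S}_d$ forces $D\mid P-Q$ in $\Z[X]$; hence the \emph{entire} partition of $\mathcal{S}_d$ induced by $x$ coincides with that induced by any complex root $\beta$ of $D$. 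Thus $|\mathcal{S}_d(\beta)|=|\mathcal{S}_d(x)|\le p^\delta$ exactly, Theorem~\ref{thm:bv} gives $M(\beta)$ close to $1$, Lehmer makes $\beta$ a root of unity of order $\ll d\log\log d$, and the order hypothesis on $x$ yields the contradiction. Taking $d\approx\sqrt{\delta}\log X$, the $10^d\le X^{O(\sqrt{\delta})}$ exceptional primes are the only possible $\delta$-bad primes in $[\sqrt{X},X]$, giving (c).
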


We have used the Vinogradov notation $\ll_\epsilon$ to mean that the inequality holds up to a multiplicative constant depending only on $\epsilon$.

It is clear that $(c)$ implies $(b)$ in the above theorem, because for $\delta,\epsilon \in(0,\frac{1}{2})$ every large enough $(\delta,\epsilon)$-very bad prime must be $\delta$-bad.

The relevance of the Lehmer conjecture to the study of the family of polynomials $\mathcal{S}_d$ is well-known and appears for example in Konyagin's work \cite{konyagin1, konyagin2}. 

As we mentioned above, the growth of $|\mathcal{S}_d(\alpha)|$ is intimately related to the behavior of the random process $\sum_0^d \epsilon_i \alpha^i$ where the $\epsilon_i$ are Bernoulli random variables equal to $0$ or $1$ with probability $\frac{1}{2}$.
These processes have been studied by Chung, Diaconis and Graham \cite{CDG} and  Hildebrand \cite{hildebrand}.
Very recently they have been used to study irreducibility of random polynomials in \cite{breuillard-varju-random-poly}.

In the remainder of this paper we prove the above theorems. We first handle Theorem \ref{main} and then deduce Theorem \ref{second}.

\section{Proof that $(b)$ implies $(a)$ in Theorem \ref{main}}

We recall the following fairly well-known lemma (see  Lemma 16 in \cite{breuillard-varju-entropy} and \S 8 in \cite{breuillard-solvable}).

\begin{lemma}\label{K} Let $\alpha$ be an algebraic number. Then there is a constant $C_\alpha>0$ such that for every $d,K \geq 1$,
$$|\mathcal{S}_d^{K}(\alpha)| \leq C_\alpha (Kd)^{C_\alpha} M(\alpha)^{d}.$$
\end{lemma}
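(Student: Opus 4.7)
The plan is to realize each value $P(\alpha)$ as a point of a rank-$n$ Euclidean lattice lying inside an explicit bounded box, and apply lattice-point counting. Write $\pi_\alpha(X) = a_0\prod_{j=1}^n(X-\alpha_j)\in\Z[X]$ for the minimal polynomial of $\alpha$ (taken to be primitive), and consider the Minkowski embedding $\sigma:\Q(\alpha)\hookrightarrow V\subset\C^n$, $\beta\mapsto(\sigma_1(\beta),\ldots,\sigma_n(\beta))$, where $\sigma_j$ sends $\alpha$ to $\alpha_j$ and $V$ is the real $n$-dimensional subspace fixed by the complex-conjugation symmetries among the $\alpha_j$. Since $\sigma$ is injective and the triangle inequality gives $|P(\alpha_j)|\le K(d+1)\max(1,|\alpha_j|)^d$ for every $P\in\mathcal{S}_d^K$, $|\mathcal{S}_d^K(\alpha)|$ equals the number of points of the rank-$n$ lattice $\Lambda_d := \sigma(\Z+\Z\alpha+\cdots+\Z\alpha^d)$ lying inside the symmetric box $B\subset V$ defined by $|z_j|\le R_j := K(d+1)\max(1,|\alpha_j|)^d$.

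Two volume computations drive the estimate. First, $\mathrm{vol}(B) \le c_n (K(d+1))^n \prod_j \max(1,|\alpha_j|)^d = c_n (K(d+1))^n (M(\alpha)/|a_0|)^d$ via the defining identity $M(\alpha)=|a_0|\prod_j\max(1,|\alpha_j|)$. Second, the defining relation $a_0\alpha^n+a_1\alpha^{n-1}+\cdots+a_n=0$ combined with primitivity of $\pi_\alpha$ and Gauss's lemma implies $[\Lambda_k:\Lambda_{k-1}]=|a_0|$ for every $k\ge n$, and iterating yields $\mathrm{covol}(\Lambda_d) = D_\alpha / |a_0|^{d-n+1}$ with $D_\alpha:=\mathrm{covol}(\Lambda_{n-1})$ a positive constant depending only on $\alpha$. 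Dividing, $\mathrm{vol}(B)/\mathrm{covol}(\Lambda_d)\ll_\alpha (Kd)^n M(\alpha)^d$, matching the right-hand side of the lemma up to the polynomial factor.

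The final, and main technical, step is to convert this volume-to-covolume ratio into an actual upper bound on $|\Lambda_d\cap B|$. For an eccentric box, the naive inequality $|\Lambda\cap B|\le c_n\mathrm{vol}(B)/\mathrm{covol}(\Lambda)$ can fail, but here we exploit the structure of $\Lambda_d$: each generator $\sigma(\alpha^k)$ with $0\le k\le d$ satisfies $|\alpha_j^k|\le\max(1,|\alpha_j|)^d = R_j/(K(d+1))$ for every $j$, hence lies in $(K(d+1))^{-1} B$, and these $d+1$ vectors span $V$ over $\R$. Consequently all $n$ successive minima $\lambda_1\le\cdots\le\lambda_n$ of $B$ with respect to $\Lambda_d$ satisfy $\lambda_i\le 1/(K(d+1))\le 1$. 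Combining the standard estimate $|\Lambda_d\cap B|\le\prod_i(2\lfloor 1/\lambda_i\rfloor+1)\le 3^n/\prod_i\lambda_i$ with Minkowski's second theorem $\prod_i\lambda_i\ge (2^n/n!)\,\mathrm{covol}(\Lambda_d)/\mathrm{vol}(B)$ then yields $|\Lambda_d\cap B|\ll_n\mathrm{vol}(B)/\mathrm{covol}(\Lambda_d)$, as required. Corner cases $d<n$ (where $\Lambda_d$ fails to have full rank) are handled by the trivial bound $|\mathcal{S}_d^K(\alpha)|\le (2K+1)^{d+1}$, absorbed into $(Kd)^{C_\alpha}$ by enlarging $C_\alpha$.
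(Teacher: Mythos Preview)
The paper does not actually supply a proof of this lemma; it merely records it as ``fairly well-known'' and points to Lemma~16 of \cite{breuillard-varju-entropy} and \S8 of \cite{breuillard-solvable}. Your argument---embed $\Q(\alpha)$ via its archimedean places, observe that $\mathcal{S}_d^K(\alpha)$ lands in a box whose volume is controlled by $(Kd)^n(M(\alpha)/|a_0|)^d$, compute the covolume of $\Z[\alpha]_{\le d}$ as $D_\alpha |a_0|^{-(d-n+1)}$ via the index calculation $[\Lambda_k:\Lambda_{k-1}]=|a_0|$, and then count lattice points---is precisely the standard geometry-of-numbers proof underlying those references, so there is nothing substantive to compare.

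One cosmetic point: the inequality $|\Lambda_d\cap B|\le\prod_i(2\lfloor 1/\lambda_i\rfloor+1)$ that you label ``the standard estimate'' is not literally a theorem in this form (the sharp bound of this shape is in fact a conjecture of Betke--Henk--Wills). What \emph{is} standard, and all you need, is the weaker statement that when all successive minima satisfy $\lambda_i\le 1$ one has $|\Lambda\cap B|\ll_n \prod_i \lambda_i^{-1}$, equivalently $|\Lambda\cap B|\ll_n \mathrm{vol}(B)/\mathrm{covol}(\Lambda)$ by Minkowski's second theorem. Since you have already verified $\lambda_n\le 1/(K(d+1))\le 1$ using the explicit generators $\sigma(\alpha^k)$, this suffices, and your final bound $|\mathcal{S}_d^K(\alpha)|\ll_\alpha (Kd)^n M(\alpha)^d$ (with $n=\deg\alpha$) is correct. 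The treatment of the range $d<n$ via the trivial bound is also fine.
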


If Lehmer's conjecture fails, then for each $\delta>0$ there is an algebraic unit $\alpha$ such that $M(\alpha)<1+\delta^2/2$. By the lemma above we conclude that if we choose $\epsilon$ in the interval $(0,C_\alpha^{-1} \delta^2 /4)$, then for all large enough $d$ we have:
$$|\mathcal{S}^{2^{\epsilon d}}_d(\alpha)| \leq 2^{\delta^2 d}.$$

Let $F$ be the Galois closure of $\Q(\alpha)$. Let $p$ be a prime number which splits completely in $F$ and let $\p$ be a prime ideal in $F$ above $p$. Denoting by $\bar{\alpha}$ the residue class of $\alpha$ modulo $\p$ we must have:
$$|\mathcal{S}^{2^{\epsilon d}}_d(\bar{\alpha})| \leq 2^{\delta^2 d},$$ for all sufficiently large $d \in \N$.  In particular if $d \leq \frac{1}{\delta} \log p$, we get 
$$|\mathcal{S}^{2^{\epsilon d}}_d(\bar{\alpha})| \leq p^{\delta}.$$
In particular if the multiplicative order of $\bar{\alpha}$ is at least $\sqrt{\frac{p}{\log p}}$, then $p$ is $(\delta,\epsilon)$-very bad. 

On the other hand, the Frobenius-Chebotarev density theorem tells us that there is a positive proportion of primes $p$ which split completely in $F$. Therefore in order to complete the proof, it is enough to show that there is only a vanishing proportion of primes $p$ for which  $\bar{\alpha}$ has multiplicative order  at most  $\sqrt{\frac{p}{\log p}}$.

\begin{lemma} Let $F|\Q$ be a finite Galois number field. Let $\mathcal{O}_F$ be its ring of integers and let $\alpha\in \mathcal{O}_F \setminus\{0\}$. Let $\mathcal{P}_\alpha$ be the set of primes $p \in \N$ such that there is a prime ideal $\p$ in $F$ above $p$ such that $|\mathcal{O}_F/\p|=p$ and $\alpha \mod \p$ is non-zero and of multiplicative order in $\F_p^\times$ at most  $\sqrt{\frac{p}{\log p}}$. Then as $X\to +\infty$, $$|\{p\leq X ; p \in \mathcal{P}_\alpha\}| = o(|\{p \leq X\}|).$$
\end{lemma}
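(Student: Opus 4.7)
The plan is to stratify the primes $p \in \mathcal{P}_\alpha$ with $p \leq X$ according to the multiplicative order $n$ of $\bar\alpha$ in $\F_p^\times$, which by definition satisfies $n \leq \sqrt{p/\log p} \leq T := \sqrt{X/\log X}$. The congruence $\bar\alpha^n = 1$ in $\mathcal{O}_F/\mathfrak{p}$ forces the prime ideal $\mathfrak{p}$ to divide $\alpha^n - 1$ in $\mathcal{O}_F$, and hence $p$ to divide the rational integer $N_{F/\Q}(\alpha^n - 1)$. The core of the argument will therefore be, for each $n \leq T$, to bound the number of rational primes dividing this integer, and then to sum over $n$.

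In the intended application $\alpha$ is not a root of unity (otherwise $\bar\alpha$ has bounded order for all but finitely many split primes and the lemma would fail, e.g.\ when $\alpha = 1$), so I assume this from the outset. Then $\alpha^n - 1$ is a nonzero element of $\mathcal{O}_F$, and estimating over the archimedean embeddings $\sigma: F \hookrightarrow \C$ gives
$$|N_{F/\Q}(\alpha^n - 1)| \leq \prod_{\sigma}(|\sigma(\alpha)| + 1)^n = C_\alpha^n$$
for a constant $C_\alpha > 1$ depending only on $\alpha$.

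The decisive ingredient will be the Hardy--Ramanujan inequality $\omega(m) \ll \log|m|/\log\log|m|$ bounding the number of distinct prime divisors of a nonzero integer $m$. Combined with the norm estimate, it yields at most $O_\alpha(n/\log n)$ candidate primes for each order $n$, and summing gives
$$|\mathcal{P}_\alpha \cap [1,X]| \leq \sum_{n=1}^{T} \omega\bigl(N_{F/\Q}(\alpha^n-1)\bigr) \ll_\alpha \sum_{n=1}^{T} \frac{n}{\log(n+2)} \ll \frac{T^2}{\log T} \ll \frac{X}{(\log X)^2},$$
which is $o(X/\log X) = o(\pi(X))$ by the prime number theorem, as required.

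The subtle point is that the cruder bound $\omega(m) \leq \log_2 |m|$ would only give $\sum_n n \asymp T^2 = X/\log X$, matching $\pi(X)$ up to constants and failing to produce a $o$-statement; so the $\log\log$ gain in the Hardy--Ramanujan estimate is essential. (One could alternatively replace $\alpha^n - 1$ by $\Phi_n(\alpha)$ using the fact that $\bar\alpha$ is a primitive $n$-th root of unity in $\F_p^\times$, reducing the exponent in the norm from $n$ to $\phi(n)$; this improves the implied constant but not the order of magnitude.) The threshold $\sqrt{p/\log p}$ appearing in the definition of $\delta$-bad is calibrated so that this argument just barely closes.
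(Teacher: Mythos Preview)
Your proof is correct but takes a different route from the paper. You stratify by the order $n$ and invoke the Hardy--Ramanujan bound $\omega(m)\ll\log m/\log\log m$ to count the prime divisors of each norm $N_{F/\Q}(\alpha^n-1)$ individually, summing $\sum_{n\le T} n/\log n \ll T^2/\log T$. The paper instead restricts to primes $p\in[\sqrt{X},X]$ and notes that the \emph{product} of these (distinct) primes divides $\prod_{n\le T}N_{F/\Q}(\alpha^n-1)$; taking logarithms, each such $p$ contributes $\log p\ge\tfrac12\log X$ on the left, while the right has logarithm $\ll T^2$, giving the same $X/(\log X)^2$ bound with no appeal to Hardy--Ramanujan. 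Thus the extra logarithm you squeeze from the $\log\log$ in $\omega(m)$ is obtained in the paper from the largeness of the primes themselves; so contrary to your closing remark, Hardy--Ramanujan is essential only to \emph{your} arrangement of the argument, not to the lemma. (You are also right that the lemma as stated fails when $\alpha$ is a root of unity; both proofs tacitly need $\alpha^n\neq 1$ for all $n\ge 1$, which holds in the application.)
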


\begin{proof}Note that if $\alpha^n-1 \in \p$, then $p=N_{F|\Q}(\p)$ divides $N_{F|\Q}(\alpha^n-1)$. In particular 
$$\prod_{p\in \mathcal{P}_\alpha ;  \sqrt{X} \leq p \leq X} p \quad\textnormal{ divides }\quad \prod_{n \leq (X/\log X)^{1/2} }N_{F|\Q}(\alpha^n-1).$$
 On the other hand,
$$|N_{F|\Q}(\alpha^n-1) |\leq \prod_{\sigma \in Gal(F|\Q)} (1+|\sigma(\alpha)|^n) \leq \prod_{\sigma \in Gal(F|\Q)} 2 \max\{1,|\sigma(\alpha)|\}^n \leq 2^{[F:\Q]} M(\alpha)^n$$
We consider this inequality for $n=1,\ldots,(X/\log X)^{1/2}$ and get
$$ |\mathcal{P}_\alpha \cap [\sqrt{X},X]|  \log X  \ll (\frac{X}{\log X})^{1/2} [F:\Q] + \frac{X}{\log X} \log M(\alpha).$$
The conclusion follows immediately given that $|\{p\le X\}|\gg X/\log X$.
\end{proof}

\begin{remark} Clearly we can replace  $(p/\log p)^{1/2}$  by  $p^{1/2}\eps(p)$ for any function $\eps(p)$ tending to $0$ as $p \to +\infty$ with the same proof.
Erd\H{o}s and Murty improve this even further to $p\eps(p)$ in \cite{erdos-murty} in the special case $F=\Q$ assuming the Riemann hypothesis for certain Dedekind zeta functions.
\end{remark}

\section{Proof that $(a)$ implies $(c)$ in Theorem \ref{main}}
The proof of this implication uses the harder part (the lower bound) in Theorem~\ref{thm:bv}.

Let $\mathcal{P}_d$ be the set of polynomials with degree at most $d$ and coefficients in $\{-1,0,1\}$. Note that $\mathcal{P}_d= \mathcal{S}_d - \mathcal{S}_d$. Let $\mathcal{I}_d$ be the set of monic $\Q$-irreducible polynomials dividing at least one non-zero polynomial in $\mathcal{P}_d$. We will say that a prime $p$ is \emph{$d$-exceptional} if it divides the resultant $Res(D_1,D_2)$ of some pair of distinct irreducible polynomials $D_1,D_2$  in $\mathcal{I}_d$. 

Recall that given a field $F$  every field element $\alpha \in F$ determines a partition $\pi_{\alpha,d}$ of $\mathcal{S}_d$ made of the preimages of the map $P \mapsto P(\alpha)$ from $\mathcal{S}_d$ to $F$. Clearly $|\mathcal{S}_d(\alpha)|$ is the number of parts of $\pi_{\alpha,d}$.

\bigskip

\noindent \emph{Claim 1}. If $p$ is not $d$-exceptional, then for every $\alpha \in \F_p$ there is at most one $D \in \mathcal{I}_d$ such that $D(\alpha)=0$ and moreover $\pi_{\alpha,d} = \pi_{\beta,d}$ if $\beta \in \Q$ is a root of $D$.

\bigskip

\begin{proof} If $p$ is not $d$-exceptional, then the reductions mod $p$ of the polynomials in $\mathcal{I}_d$ are pairwise relatively prime. In particular they cannot have a common root in $\F_p$. So each $\alpha \in \F_p$ can be the root of at most one $D \in \mathcal{I}_d$. If there is such a $D$, it is unique, and for any $P,Q \in \mathcal{S}_d$  we have the equivalences : $P(\alpha)=Q(\alpha)$ if and only if $D$ divides $P-Q$ and if and only if $P(\beta)=Q(\beta)$, where $\beta$ is chosen to be a complex root of $D$. This ends the proof. 
\end{proof}

Note that if $\pi_{\alpha,d} = \pi_{\beta,d}$, then $|\mathcal{S}_n(\alpha)| = |\mathcal{S}_n(\beta)|$ for all $n\leq d$. Similarly for any $n\leq d$ we have  $\alpha^n=1$ in $\F_p$ if and only if $\beta^n=1$ in $\overline{\Q}$.
Here we used that the $n$'th cyclotomic polynomial is in $\mathcal{I}_d$ for $n\leq d$.

\bigskip

\noindent \emph{Claim 2}. For $d$ large enough, there are at most $10^d$ $d$-exceptional primes.

\bigskip

\begin{proof} If $D_1,D_2$ are two polynomials in $\mathcal{I}_d$, their resultant can be bounded above by Hadamard's inequality:
$$|Res(D_1,D_2)| \leq \|D_1\|_2^{\deg D_2} \|D_2\|_2^{\deg D_1},$$   where $\|P\|_2$ denotes the $\ell^2$-norm of the coefficients of a polynomial $P$. Recall that for every $P \in \C[X]$ we have $\|P\|_1 \leq 2^{\deg P} M(P)$. This is easily seen by expressing the coefficients of $P$ as sums of products of roots of $P$. Recall also that $M(P) \leq \|P\|_1$ as can be seen using the Jensen formula for $M(P)$.
Here $\|P\|_1$ denotes the $\ell^1$-norm of the coefficients of a polynomial $P$.
However if $D \in \mathcal{I}_d$, then $D$ divides some $P \in \mathcal{P}_d$ and hence $M(D) \leq M(P) \leq \|P\|_1 \leq d+1$, and thus $$\|D\|_2 \leq \|D\|_1 \leq 2^d(d+1).$$
It follows that
$$|Res(D_1,D_2)| \leq 2^{2d^2}(d+1)^{2d} \leq 2^{4d^2}.$$
The number of distinct prime factors of $|Res(D_1,D_2)|$ is thus at most $4d^2$ when $d$ is large enough. Since there are at most $d$ irreducible factors of $P$ for any $P \in \mathcal{P}_d$, $|\mathcal{I}_d| \leq d3^{d+1}$. Hence there can only be at most $4d^2|\mathcal{I}_d|^2 \leq 4d^49^{d+1}$ $d$-exceptional primes. 
\end{proof}

We are now ready to conclude the proof of the implication $(c) \Rightarrow (a)$ in Theorem \ref{main}. We assume that Lehmer's conjecture holds, and therefore that there exists $\delta_0>0$ such that $M(\beta)>e^{\delta_0}$ for every algebraic number $\beta$ that is not a root of unity. Let $\delta>0$ and $X \geq 1$. Assume that $\delta < (c \delta_0/2)^2$, where $c$ is the constant from Theorem \ref{thm:bv}. Let $d$ be the integer part of $\sqrt{\delta} \log X$.

\bigskip

\noindent {\emph{Claim 3.}} Let $p\in [\sqrt{X},X]$ be a prime number. If $p$ is $\delta$-bad, then $p$ is $d$-exceptional.

\bigskip

\begin{proof} If $p$ is $\delta$-bad, there is $\alpha \in \F_p\setminus\{0\}$ of multiplicative order at least $\log p\log\log\log p$ such that $|\mathcal{S}_n(\alpha)| \leq p^\delta$ for all $n \leq \log p$. 
In particular $$|\mathcal{S}_{[\frac{1}{2}\log X]}(\alpha)| \leq X^\delta.$$ On the other hand if $p$ is not $d$-exceptional, by Claim 1 above, there is $\beta \in \overline{\Q}$ of degree at most $d$ such that $|\mathcal{S}_n(\alpha)|=|\mathcal{S}_n(\beta)|$ for all $n \leq d$. In particular if say $\delta<\frac{1}{4}$, we have $d \leq \frac{1}{2}\log X$ and thus $$|\mathcal{S}_d(\beta)| \leq X^\delta \leq e^{\sqrt{\delta}(d+1)} \leq e^{2\sqrt{\delta}d}.$$

However according to Theorem \ref{thm:bv}, $\max\{2,M(\beta)\}^c \leq |\mathcal{S}_d(\beta)|^{1/d}$ (this holds for all $d$ by sub-multiplicativity of $d \mapsto |\mathcal{S}_d(\beta)|$). Hence 
$$ M(\beta) \leq e^{\frac{2}{c}\sqrt{\delta}}.$$

Since we assume the validity of the Lehmer conjecture, it follows that $\beta$ must be a root of unity. However $\beta$ is a root of a polynomial $P$ in $\mathcal{P}_d$, hence has degree at most $d$ over $\Q$. Its minimal polynomial must be a cyclotomic polynomial $\Phi_m$ dividing $P$ hence in $\mathcal{I}_d$. But $p$ is not $d$-exceptional, so by Claim 1 we know that $\Phi_m(\alpha)=0$.

 It is well known that there is an absolute numerical constant $C>0$ such that if $x$ is a root of unity in $\overline{\Q}$ and has degree at most $d$ as an algebraic number over $\Q$, then the order of $x$ is at most $C d \log \log d$. Indeed, the degree of a root of unity of order $n$ is given by the Euler totient function, which satisfies the lower estimate:  $\phi(n) \gg n/\log \log n$ (see \cite[Thm 328]{hardy-wright}).

It follows that $\alpha$ has multiplicative order at most $m \leq C_0 d \log \log d$. As this is less than $\log p \log \log \log p$ if $X$ is large enough (and $\delta_0$ chosen small enough), a contradiction. 
\end{proof}

Now combining Claims 2 and 3 we get:
$$|\{p \in [\sqrt{X},X] ; p \textnormal{ is } \delta\textnormal{-bad} \}| \leq 10^d \leq X^{3\sqrt{\delta}},$$ for all $X \geq 1$, hence

$$|\{p \leq X ; p \textnormal{ is } \delta\textnormal{-bad} \}| \leq X^\delta + \sum_{ k \ge 0 ; 2^k \leq 1/\delta} X^{3 \sqrt{\delta}/2^k} \ll_{\delta} X^{3\sqrt{\delta}}.$$
This concludes the proof of Theorem \ref{main}.

\section{Proof of Theorem \ref{second}}

The proof that $(b)$ implies $(a)$ is immediate from the corresponding implication in Theorem \ref{main}. Indeed given  $\epsilon \in (0,\frac{1}{2})$ and $C > 1$ every large enough $(\frac{1}{C},\epsilon)$-very bad prime will be $C$-wild.

In the converse direction, in view of Theorem \ref{main} it is enough to show that assertion $(c)$ of Theorem \ref{main} implies $(b)$ of Theorem \ref{second}. Let $\epsilon=\frac{1}{2}$ and let $\delta$ be given by assertion $(c)$. It is enough to find $C=C(\delta)>1$ such that every $C$-wild prime is $\delta$-bad. We will show the contrapositive. So assume $p$ is not $\delta$-bad. Then for every $x\in \F_p^{\times}$ with multiplicative order at least $(\log p)^2$ we have 
$$|\mathcal{S}_d(x)| \geq p^\delta$$
for some $d \leq \log p$. Now recall  the sum-product theorem over $\F_p$ : 

\begin{theorem}[Sum-product theorem, see 2.58 and  4.10 in \cite{tao-vu}] There is $\epsilon>0$ such that for all primes $p$ and all subsets $A \subset \F_p$ at least one of three possibilities can occur: either $AA+AA+AA=\F_p$, or $|A+A|>|A|^{1+\epsilon}$, or $|AA|>|A|^{1+\epsilon}$. 
\end{theorem}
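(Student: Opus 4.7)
The statement is the Bourgain--Katz--Tao sum-product theorem over $\F_p$, and I would only sketch the classical strategy. I would argue the contrapositive: assume $|A+A| \le |A|^{1+\eps}$ and $|AA| \le |A|^{1+\eps}$, and derive $AA+AA+AA = \F_p$, with $\eps$ chosen small in the end. One may restrict attention to $|A|$ in an intermediate range $p^{\delta} \le |A| \le p^{1-\delta}$; the extremes ($|A|$ very small, or $|A| \gg p$) are handled by separate, easier arguments, e.g.\ a direct application of the Cauchy--Davenport theorem or a pigeonhole argument. After discarding $\{0\}$ from $A$, I may work simultaneously in the additive group $(\F_p,+)$ and the multiplicative group $(\F_p^\times,\cdot)$.

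First I would apply the Pl\"unnecke--Ruzsa inequalities in both the additive and multiplicative settings: from the doubling hypotheses one gets
$$|mA - nA| \le |A|^{1 + O_{m,n}(\eps)} \qquad\text{and}\qquad |A^m / A^n| \le |A|^{1 + O_{m,n}(\eps)}$$
for all fixed $m,n$. If required, a preliminary application of the Balog--Szemer\'edi--Gowers theorem extracts a large subset $A' \subset A$ on which the additive and multiplicative energies are simultaneously controlled. This reduces everything to a clean small-doubling / small-tripling regime where Ruzsa calculus is available.

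The core of the proof is then an incidence bound in $\F_p^2$: for any collection of $N$ points and $N$ lines with $N \le p^{2-\delta}$, the number of incidences is at most $O(N^{3/2 - \gamma})$ for some $\gamma = \gamma(\delta) > 0$. This is the Bourgain--Katz--Tao incidence theorem. I would apply it to a configuration of points and lines built from $A$: for instance, take points $(a,b) \in A' \times A'$ and lines $y = \alpha x + \beta$ indexed by $\alpha \in A'$ and $\beta \in A' - A'$. The small sum and product doubling hypotheses force a lower bound on the incidence count that contradicts the incidence upper bound --- unless $A$ is ``approximately a coset of a subfield'' of $\F_p$. Since the only subfields of $\F_p$ are $\{0\}$ and $\F_p$ itself, no nontrivial approximate subfield exists, so $A$ must generate all of $\F_p$ through very few additions and multiplications; a bookkeeping of the Pl\"unnecke--Ruzsa exponents shows that three multiplications followed by two additions already suffice, i.e.\ $AA+AA+AA=\F_p$.

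The main obstacle is, by a large margin, the incidence bound itself. Over $\R$ one has the Szemer\'edi--Trotter theorem, proved via a cell decomposition or a crossing-number argument, but in $\F_p$ these topological tools are unavailable. The original Bourgain--Katz--Tao proof proceeds by a delicate bootstrap that alternates between Ruzsa calculus and a projective-geometry argument, raising the exponent improvement $\gamma$ in successive rounds. A more modern route goes through Rudnev's point--plane incidence theorem, whose proof in turn rests on an algebraic/polynomial method over $\F_p$. Either path requires substantial machinery beyond what is developed in this note, which is presumably why the authors simply quote the result from Tao--Vu.
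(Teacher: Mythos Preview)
The paper does not prove this theorem at all: it is stated with a citation to Tao--Vu (2.58 and 4.10) and used as a black box in the derivation of Theorem~\ref{second}. So there is no ``paper's own proof'' to compare against; your closing remark already correctly anticipated this. Your sketch of the Bourgain--Katz--Tao strategy is a reasonable outline of what lies behind the cited reference, but for the purposes of this note no proof is expected or required.
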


Note further that $\mathcal{S}_d^{K}(x) + \mathcal{S}_d^{K}(x) \subset \mathcal{S}_d^{2K}(x)$, while  $\mathcal{S}_d^{K}(x) . \mathcal{S}_d^{K}(x) \subset \mathcal{S}_{2d}^{dK^2}(x)$. In particular, starting from  $\mathcal{S}_d(x)$ and applying at most $n$ times either a sumset or a product set, we obtain a subset of $\mathcal{S}_{2^nd}^{d^{2^n}}(x)$.

Applying the sum-product theorem, we see that after some $n=n(\delta)$ steps $\mathcal{S}_{2^n[\log p]}^{[\log p]^{2^n}}(x)$ must be all of $\F_p$. Setting $C=2^{n+1}$, we conclude (by $(\ref{sumset})$) that $p$ is not $C$-wild, as desired. This concludes the proof of Theorem \ref{second}.


\section*{Acknowledgments} 

PV wishes to thank Boris Bukh for interesting discussions on subjects related to this note.
In particular, PV first learned about a variant of the implication $(b)\Rightarrow(a)$ in Theorem \ref{second}
from him. 

We are very grateful to the anonymous referee for his or her careful reading of our paper.


\newcommand{\etalchar}[1]{$^{#1}$}
\def\cprime{$'$}

\bibliographystyle{amsplain}

\begin{thebibliography}{99}
\bibitem[B07]{breuillard-solvable}
Breuillard, E.
\newblock {\em  On uniform exponential growth for solvable groups. }
\newblock Pure Appl. Math. Q. 3 (2007), no. 4, Special Issue: In honor of Grigory Margulis. Part 1, 949--967. 

\bibitem [BV16]{breuillard-varju-entropy}
Breuillard, E. and  Varj\'u, P. P.
\newblock {\em Entropy of Bernoulli convolutions and uniform exponential growth for linear groups.}
\newblock J. Anal. Math. in press.

\bibitem[BV18]{breuillard-varju-random-poly}
Breuillard, E. and  Varj\'u, P. P.
\newblock{\em Irreducibility of random polynomials of large degree.}
\newblock preprint, arXiv:1810.13360.

\bibitem[CDG87]{CDG}
Chung, F. R. K., Diaconis, P. and Graham, R. L.
\newblock{\em Random walks arising in random number generation.}
\newblock Ann. Probab. 15 (1987), no. 3, 1148--1165.

\bibitem[EM99]{erdos-murty}
Erd\H{o}s, P.  and  Murty, R.
\newblock {\em On the order of a mod p.}
\newblock CRM Proceedings and Lecture Notes, Volume 19, (1999) pp. 87--97.

\bibitem[HW79]{hardy-wright}
Hardy, G. H. and  Wright, E. M. 
\newblock {\em An Introduction to the Theory of Numbers (Fifth ed.),}
\newblock Oxford University Press, 1979.

\bibitem[Hil90]{hildebrand}
Hildebrand, M. V.
\newblock{\em Rates of convergence of some random processes on finite groups.}
\newblock Thesis (Ph.D.)--Harvard University, ProQuest LLC, Ann Arbor, MI, 1990, 82 pp.

\bibitem [K92]{konyagin1} 
Konyagin, S. V. 
\newblock {\em  Estimates for Gaussian sums and Waring's problem modulo a prime.}
\newblock  Trudy Mat. Inst. Steklov. 198 (1992), 111--124; translation in Proc. Steklov Inst. Math. 1994, no. 1(198), 105--117.

\bibitem [K99]{konyagin2} 
Konyagin, S. V. 
\newblock {\em On the number of irreducible polynomials with 0,1 coefficients.}
\newblock Acta Arith. 88 (1999), no. 4, 333--350.


\bibitem[TV10]{tao-vu}
Tao, T. and  Vu, V.H.
\newblock {\em Additive combinatorics}, volume 105 of {\em Cambridge Studies in
  Advanced Mathematics}.
\newblock Cambridge University Press, 2010.


\end{thebibliography}


\begin{dajauthors}
\begin{authorinfo}[EB]
  Emmanuel Breuillard\\
  DPMMS\\
  University of Cambridge\\
  United Kingdom\\
  emmanuel.breuillard@maths.cam.ac.uk
\end{authorinfo}
\begin{authorinfo}[PV]
  P\'eter P. Varj\'u\\
  DPMMS\\
  University of Cambridge\\
  United Kingdom\\
  pv270@dpmms.cam.ac.uk
\end{authorinfo}
\end{dajauthors}

\end{document}